\newtheorem{theorem}{Theorem}
\newtheorem{proposition}[theorem]{Proposition}
\newtheorem{problem} [theorem] {Problem}
\newtheorem{lemma}[theorem]{Lemma}
\theoremstyle{definition}
\newtheorem{example}[theorem]{Example}
\newtheorem{definition}[theorem]{Definition}
\newtheorem{remark} [theorem] {Remark}
\begin{document}
\title{ Strong Monotonicity of Spectral Radius of Positive Operators.}
\author{ Donald W. Hadwin}
\address{ University of New Hampshire, 105 Main Street, Durham, NH 03824}
\email{don@unh.edu}
\author{ Arkady K. Kitover}
\address{ Community College of Philadelphia, 1700 Spring Garden Street,
Philadelphia, PA 19130}
\email{akitover@ccp.edu}
\author{Mehmet Orhon}
\address{ University of New Hampshire, 105 Main Street, Durham, NH 03824}
\email{mo@unh.edu}
\subjclass[2010]{Primary 47B65; Secondary 47B42}
\date{\today }
\keywords{Positive operators, spectrum}
\maketitle

\begin{abstract}
The classic result of Perron and Frobenius states that if $A$ and $B$ are
matrices with nonnegative elements, such that $A \leq B$, $A$ is
irreducible, and $\rho(A) = \rho(B)$ then $A = B$. We extend this result to
a large class of band irreducible positive operators on a large class of
Banach lattices and provide examples to show that the conditions we put on
the operators and the Banach lattices cannot be weakened.
\end{abstract}

A part of the famous Perron - Frobenius theorem states that if $A$ and $B$
are matrices with nonnegative elements, such that $A\leq B$, $A$ is
irreducible, and $\rho (A)=\rho (B)$ where $\rho (A)$ means the spectral
radius of $A$ then $A=B$. It was proved by Perron in \cite{Pe} in 1907 for
matrices with strictly positive elements and by Frobenius in \cite{Fr} in
1912 for arbitrary irreducible matrices. The proof can be also found in \cite%
[Section 2.1]{Va}. While the part of Perron - Frobenius theorem which
describes the structure of peripheral spectrum of nonnegative irreducible
matrices became a subject of very active research and was successively
extended to include the class of band irreducible positive compact operators
on Banach lattices (an extensive bibliography can be found in \cite{Gr} and
in~\cite{AA}) the above statement about spectral radii until recently
attracted comparatively less attention.

Nevertheless in 1970 Marek~\cite[Theorem 4.3]{Ma} proved the following very
general result.

\begin{theorem}
\label{t2} (Marek~\cite{Ma}) Let $X$ be a partially ordered Banach space
with the closed normal $B$-cone $K$. Let $T_1$, $T$ be positive linear
operators on $X$, $T_1 \leq T$, and $T$ be semi non-supporting. Let $\rho(T)$
be an eigenvalue of $T$ and the Banach conjugate operator $T^\prime$ to
which correspond a positive eigenvector and respectively a positive
eigenfunctional. Let also $\rho(T_1)$ be an eigenvalue of $T_1^\prime$ with
a positive eigenfunctional. Finally assume that $\rho(T) = \rho(T_1)$. Then $%
T = T_1$.
\end{theorem}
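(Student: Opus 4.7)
My plan is to set $\rho := \rho(T) = \rho(T_1)$ and to fix the data granted by the hypotheses: a nonzero $x \in K$ with $Tx = \rho x$, a nonzero $\psi \in K^*$ with $T'\psi = \rho\psi$, and a nonzero $\phi \in K^*$ with $T_1'\phi = \rho\phi$. I will upgrade $\phi$ in two stages -- first to an eigenfunctional of $T'$ as well, then to a \emph{strictly} positive functional on $K\setminus\{0\}$. Once both properties are in place, the identity $(T-T_1)'\phi = 0$ combined with strict positivity of $\phi$ and $T - T_1 \geq 0$ immediately forces $T = T_1$ on $K$, and hence on all of $X = K - K$.

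In the first stage, note that $T_1 \leq T$ implies $g := T'\phi - \rho\phi = (T-T_1)'\phi \in K^*$, while on $x$ one computes $g(x) = \phi(Tx) - \rho\phi(x) = 0$. To conclude $g = 0$ I need $x$ to be quasi-interior for $K$, i.e.\ $f(x) > 0$ for every nonzero $f \in K^*$, and this is exactly where the semi non-supporting hypothesis enters. Applied to the pair $(x, f)$ for an arbitrary nonzero $f \in K^*$, semi non-supporting produces some $p$ with $f(T^p x) > 0$; since $T^p x = \rho^p x$ this forces both $\rho > 0$ and $f(x) > 0$. Hence $g \geq 0$ together with $g(x) = 0$ yields $g = 0$, i.e.\ $T'\phi = \rho\phi$.

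In the second stage, for an arbitrary $u \in K \setminus \{0\}$, semi non-supporting applied to the pair $(u, \phi)$ produces $p$ with $\phi(T^p u) > 0$; using the conclusion of stage one this becomes $\rho^p \phi(u) > 0$, so $\phi(u) > 0$. Thus $\phi$ is strictly positive on $K \setminus \{0\}$. The identity $(T-T_1)'\phi = 0$ from stage one then gives $\phi\bigl((T-T_1) y\bigr) = 0$ for every $y \in X$; for $y \in K$ the vector $(T-T_1) y$ lies in $K$, so strict positivity of $\phi$ forces $(T-T_1) y = 0$. Since $K$ is a $B$-cone, $X = K - K$, and linearity gives $T = T_1$.

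The main obstacle is the first stage, namely confirming that the semi non-supporting property (a single-$p$ statement, weaker than Sawashima's non-supporting condition) is enough to deliver quasi-interiority of $x$ and, later, strict positivity of $\phi$. The arguments above invoke it only through the existence of one suitable $p$ in each case, so the statement as given should suffice; but any further weakening of the hypothesis would break these two steps and with them the theorem. Note that the positive eigenfunctional $\psi$ of $T'$ promised by the hypothesis plays no role in this approach; its presence in the statement suggests that Marek's original argument may proceed differently, which is worth checking against the proposed route.
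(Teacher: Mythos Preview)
The paper does not supply its own proof of this theorem: Theorem~\ref{t2} is quoted from Marek~\cite{Ma} and stated without argument, so there is nothing in the paper to compare your proposal against.

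That said, your argument is correct and is in fact the standard route to Marek's result. Two small remarks. First, the $B$-cone condition as stated in the paper only guarantees that $K-K$ is \emph{dense} in $X$ (every $x$ is a limit of differences of bounded cone elements), not that $X = K-K$ on the nose; but since $T-T_1$ is bounded, vanishing on the dense subspace $K-K$ already forces $T=T_1$ on all of $X$, so your conclusion stands. Second, your observation that the eigenfunctional $\psi$ of $T'$ plays no role is accurate: once $T$ is semi non-supporting and has a positive eigenvector $x$ at $\rho(T)$, the quasi-interiority of $x$ is automatic, and the rest runs on $\phi$ alone. The hypothesis on $\psi$ is indeed redundant in this line of proof.
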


Let us discuss the meaning of conditions of Theorem~\ref{t2} in the case
when the partially ordered Banach space $X$ is a Banach lattice.

The condition that the cone $K$ is a $B$-cone was introduced by Sawashima in~%
\cite{Sa} and means the following:

\begin{equation*}
\exists C >0, \forall x \in X \exists \{x_{1n}\}_{n \in \mathds{N}},
\{x_{2n}\}_{n \in \mathds{N}} \subset K \; \mathrm{such \; that}\; x_{1n} -
x_{2n} \mathop \to \limits_{n \to \infty} x \;
\end{equation*}
\begin{equation*}
\mathrm{and}\; \|x_{in}\| \leq C\|x\|, i = 1,2, n \in \mathds{N}.
\end{equation*}
This condition is obviously satisfied for the cone of positive elements in a
Banach lattice.

The condition that an operator $T$ is semi non-supporting was also
introduced in~\cite{Sa} and means that for any nonzero $x \in K$ and $%
x^\prime$ in $K^\prime$ there is a $p = p(x,x^\prime) \in \mathds{N}$ such
that $(T^p x, x^\prime) > 0$. In the case of Banach lattices this condition
is well known to be equivalent to $T$ being ideal irreducible. For the sake
of completeness we provide a short proof.

\begin{proposition}
\label{p1} Let $T$ be a positive operator on a Banach lattice $X$. Then $T$
is semi non-supporting if and only if it is ideal irreducible.
\end{proposition}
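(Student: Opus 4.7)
The plan is to prove both implications by contradiction, exploiting the standard correspondence between closed $T$-invariant ideals in a Banach lattice and positive annihilating functionals.

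For the direction ($\Rightarrow$), assume $T$ is semi non-supporting and suppose for contradiction that there exists a proper nontrivial closed $T$-invariant ideal $J \subsetneq X$. Since $J$ is an ideal, any nonzero $y \in J$ yields $|y| \in J \cap K \setminus\{0\}$; fix $x = |y|$. The quotient $X/J$ is a nonzero Banach lattice, and a standard Hahn--Banach argument in the lattice setting (decomposing any nonzero functional into its positive and negative parts) produces a nonzero positive functional on $X/J$, which lifts to a nonzero $x' \in K'$ annihilating $J$. Because $J$ is $T$-invariant, $T^p x \in J$ and hence $\langle T^p x, x'\rangle = 0$ for every $p \in \mathds{N}$, contradicting the semi non-supporting hypothesis.

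For the direction ($\Leftarrow$), assume $T$ is ideal irreducible and fix nonzero $x \in K$ and nonzero $x' \in K'$. Introduce the auxiliary set
\[
N = \{\, y \in X : \langle |y|, (T')^n x' \rangle = 0 \text{ for all } n \geq 0 \,\}.
\]
The main task is to verify that $N$ is a closed $T$-invariant ideal: the order/ideal property uses $|y_1 + y_2| \leq |y_1| + |y_2|$ together with the positivity of each $(T')^n x'$; closedness follows from continuity of the lattice modulus on $X$; and $T$-invariance rests on $|Ty| \leq T|y|$, which gives $\langle |Ty|, (T')^n x'\rangle \leq \langle T|y|, (T')^n x'\rangle = \langle |y|, (T')^{n+1} x'\rangle = 0$. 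If no $p$ satisfied $\langle T^p x, x' \rangle > 0$, then $x \in N$, and ideal irreducibility would force $N = X$; but then $\langle |y|, x' \rangle = 0$ for every $y \in X$, which contradicts $x' \neq 0$.

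The principal obstacle I anticipate lies in the ($\Leftarrow$) direction, in organizing the auxiliary set $N$ so that it is simultaneously closed, order-solid, and $T$-invariant: all three properties ultimately rest on the modulus inequality $|Ty| \leq T|y|$ for positive operators, together with continuity of the lattice operations. The ($\Rightarrow$) direction, by contrast, is essentially a one-line consequence of Hahn--Banach once the Banach lattice structure of $X/J$ is in hand.
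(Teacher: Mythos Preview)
Your proof is correct. The $(\Rightarrow)$ direction is essentially the paper's argument in different dress: the paper produces the annihilating positive functional directly from the band $J^\perp \subset X'$, while you pass through the quotient lattice $X/J$, but the content is the same.

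For the $(\Leftarrow)$ direction you take a genuinely dual route. Given $x>0$, $x'>0$ with $(T^p x,x')=0$ for all $p$, the paper works on the \emph{primal} side: it forms the single element
\[
z=\sum_{n:\,T^n x\neq 0}\frac{T^n x}{2^n\|T^n x\|}
\]
and observes that the closed principal ideal generated by $z$ is $T$-invariant (it contains the whole orbit $\{T^n x\}$) and proper (it is annihilated by $x'$). You instead work on the \emph{dual} side, building the annihilator-type ideal $N=\bigcap_{n\ge 0}\{y:\langle|y|,(T')^n x'\rangle=0\}$ and verifying that it is closed, solid, and $T$-invariant. The paper's version is a one-liner because the single generator $z$ packages everything at once; your version trades that brevity for a more structural picture in which the role of the adjoint $T'$ is explicit and no series convergence is needed. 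One small caution: your implication ``no such $p$ $\Rightarrow$ $x\in N$'' uses the case $n=0$. If $\mathds{N}$ is taken to start at $1$ (as in the Sawashima definition quoted), you should either note that $Tx\in N$ rather than $x$ and treat the degenerate case $Tx=0$ separately, or simply observe that the two possible conventions lead to equivalent notions for nonzero $T$.
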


\begin{proof}
Assume first that there is a nontrivial closed ideal $J$ in $X$ such that $%
TJ \subseteq J$. Then the annihilator of $J$ is a nontrivial band in $%
X^\prime$ and therefore there is a positive nonzero $x^\prime \in X^\prime$
such that $x^\prime(J) = 0$. Let $x \in J$ be positive and nonzero. Then $%
(T^px,x^\prime)=0, p \in \mathds{N}$ whence $T$ is not semi non-supporting.

Next assume that there are positive nonzero $x \in X$ and $x^\prime \in
X^\prime$ such that $(T^px,x^\prime)=0, p \in \mathds{N}$. Let $z = \sum
\frac{T^nx}{2^n \|T^nx\|}$ where the sum is taken over all $n \in \mathds{N}$
such that $T^nx \neq 0$. Then the closed principal ideal generated by $z$ is
$T$-invariant and nontrivial.
\end{proof}

Thus Theorem~\ref{t2} provides a very satisfactory extension of the
corresponding part of the Perron - Frobenius theorem for the ideal
irreducible operators. Our goal in this paper is to obtain a similar
extension for the larger class of \textbf{band irreducible} operators.

In fact, in the case when the Banach lattice $X$ has order-continuous norm
every closed ideal is a band and there is nothing to prove. But, as the next
example shows, for an arbitrary Banach lattice we cannot just substitute
band irreducible for ideal irreducible in the conditions of Theorem~\ref{t2}.

\begin{example}
\label{e1} Let $X=\{x\in C[0,1]\;:\;x(0)=x(1)\}$. Clearly $X$ is a Banach
lattice. Let $\alpha $ be an irrational number from $(0,1)$. By $t\dotplus
\alpha $ we will mean addition modulo $1$. Let $l\in X$ such that $l(1/2)=0$%
, $l(t)>0$, if $t\in \lbrack 0,1]$ and $t\neq 1/2$, and $\int\limits_{0}^{1}%
\ln {l(t)}dt=-\infty $. For example we can take
\begin{equation*}
l(t)=%
\begin{cases}
e^{-\frac{1}{(t-1/2)^{2}}}, & \text{if $t\neq 1/2$,} \\
0, & \text{ if $t=1/2$}%
\end{cases}%
\end{equation*}

We introduce the following positive operators.

\begin{equation*}
(Ax)(t) = x(1/2), x \in X, t \in [0,1],
\end{equation*}
\begin{equation*}
(Bx)(t) = l(t)x(t \dotplus \alpha), x \in X, t \in [0,1],
\end{equation*}
\begin{equation*}
C = A+B.
\end{equation*}

It is obvious that the operator $A$ is one dimensional and that its spectrum
$\sigma(A) = \{0,1\}$. The operator $B$ is band irreducible because $\alpha$
is an irrational number. Moreover the condition $\int \limits_0^1 \ln{l(t)}%
dt = - \infty$ guarantees that $B$ is quasinilpotent (see e.g., ~\cite{Da}).
Next notice that $\rho(C) \geq \rho(A) = 1$ and that because $C$ is a
positive operator $\rho(C) \in \sigma(C)$. The operator $C$ is a compact
perturbation of $B$. Clearly the Fredholm \footnote{%
As usual we understand by Fredholm spectrum of a bounded operator on a
Banach space its spectrum in the Calkin algebra.} spectrum of $B$, $%
\sigma_f(B)$, is the singleton $\{0\}$. The Fredholm spectrum does not
change under compact perturbations (see~\cite[Theorem 5.35, p.244]{Ka})
whence $\sigma_f(C) = \{0\}$ and by~\cite[Theorem 5.33, p. 243]{Ka} $\rho(C)$
is an isolated point and an eigenvalue of $C$ of finite multiplicity. We
claim that $\rho(C) = 1$. Indeed, assume to the contrary that $\rho(C) = r >
1$. Let $z \in X$ be the corresponding eigenvector. Then
\begin{equation*}
l(t)z(t \dotplus \alpha) + z(1/2) = rz(t), \; t \in [0,1]
\end{equation*}
Because $l(1/2) = 0$ we see that $z(1/2) = 0$ whence $l(t)z(t \dotplus
\alpha) = rz(t), \; t \in [0,1]$ in contradiction to $B$ being
quasinilpotent.

We claim additionally that the spectral projection corresponding to the
isolated eigenvalue 1 is one dimensional. Indeed, assume first that there
are two linearly independent eigenvectors $f$ and $g$ of $C$ corresponding
to the eigenvalue 1. Then $f(1/2) \neq 0$ and $g(1/2) \neq 0$. Therefore,
without loss of generality we can assume that $f(1/2) = g(1/2)$. But then $%
B(f-g) = f - g$ whence $f = g$, a contradiction.

Assume now that $f$ is an eigenvector of $C$ corresponding to the eigenvalue
1, and that $h$ is a root vector of $C$, i.e. $(I - C)h = f$. Then
\begin{equation*}
l(t)h(t \dotplus \alpha) + h(1/2) - h(t) = f(t), t \in [0,1].
\end{equation*}
If we put $t=1/2$ in the above equation we get $f(1/2) = 0$ whence $Bf =f$,
a contradiction again.

Now we see that there is a positive eigenvector of $C$ corresponding to the
eigenvalue 1. Indeed, because the spectral projection $P$ on the eigenspace
of $C$ corresponding to the eigenvalue 1 is one dimensional we can apply the
well known formula
\begin{equation*}
P = \lim \limits_{r \to 1+} (r-1)\sum \limits_{n=0}^\infty \frac{C^n}{r^{n+1}%
} \eqno(\bigstar)
\end{equation*}
(see e.g., ~\cite[p. 329]{Sc}), that shows that $P$ is a positive operator.

Similar reasoning applied to the Banach dual operators $A^\prime$, $B^\prime$%
, and $C^\prime$ shows that $\rho(C^\prime) =1$ and $1$ is an eigenvalue of $%
C^\prime$ to which corresponds a positive eigenfunctional ( notice that the
spectral projection on the eigenspace of $C^\prime$ corresponding to the
eigenvalue 1 coincides with $P^\prime$ and therefore is one dimensional).

Thus we see that the operators $A$ and $C$ satisfy all the conditions of
Theorem~\ref{t2} with the exception that instead of being ideal irreducible $%
C$ is only band irreducible (clearly $C$ leaves invariant the ideal $I = \{x
\in X : x(1/2) = 0 \}$), but exactly because of this exception the statement
of the said theorem is not true anymore.
\end{example}

A quick look at Example~\ref{e1} allows us to notice the reason for this
failure; the operators $A$ and $C$ are not order continuous. Indeed, Alekhno
(see~\cite[Theorem 5]{Al}) proved that the conclusion of Theorem~\ref{t2}
remains true under the assumption that the Banach lattice $X$ has a
separating set of order continuous functionals and the dominating operator $%
T_1$ is band irreducible and $\sigma$-order continuous (see Definition~\ref%
{d1}).

The main result of the current paper (Theorem~\ref{t1}, part (a)) extends
Alekhno's result to the much larger class of Banach lattices having the
property that the sequential Lorentz seminorm (see Definition~\ref{d2}) is a
norm. Example~\ref{e2} shows that this is as far as we can go.

We need to recall two definitions.

\begin{definition}
\label{d1} (See e.g., \cite[Page 19]{AA}). A positive operator $T$ on a
Banach lattice $X$ is called $\sigma$-order continuous if for every sequence
$\{x_n \} \subset X$ such that $x_n \downarrow 0$ we have $Tx_n \downarrow 0$%
.
\end{definition}

\begin{remark}
\label{r4} There are different definitions of $\sigma$-order continuity for
operators on vector lattices but in the case of positive operators they all
are equivalent to Definition~\ref{d1}. We refer the reader to~\cite{KW} for
a detailed discussion of this question.
\end{remark}

\begin{definition}
\label{d2} Let $X$ be a Banach lattice. The sequential Lorentz seminorm $l$
on $X$ is defined as
\begin{equation*}
l(x) = \inf \limits_{x_n \uparrow |x|} \lim \limits_n \|x_n\|, x \in X %
\eqno{(\star)}
\end{equation*}
where $\inf$ is taken over all the sequences $\{x_n \} \subset X$ such that $%
x_n \geq 0$ and $x_n \uparrow |x|$.
\end{definition}

\begin{remark}
\label{r1} The class of Banach lattices $X$ for which the sequential Lorentz
seminorm is actually a lattice norm on $X$ is very large. For example it
includes all the Banach lattices with the weak sequential Fatou property
(i.e., there is a constant $K \geq 1$ such that $x_n \uparrow x \Rightarrow
\|x\| \leq K \sup \limits_n \|x_n\|$ (see \cite[p.89]{AA1})) as well as all
Banach Lattices with a separating set of $\sigma$-order continuous
functionals. Important examples are provided by the Banach spaces of
measurable functions.
\end{remark}

We will need the following lemma.

\begin{lemma}
\label{l3} Let $X$ be a Banach lattice such that $dim X >1$. Let $T: X \to X$
be a positive, band-irreducible, and $\sigma$-order continuous operator such
that its spectral radius $\rho(T)$ is a pole of the resolvent $R(\lambda,T)$.

Assume additionally at least one of the following two conditions:

\textrm{(a)} The sequential Lorentz semi-norm $l$ is a norm on $X$;

\textrm{(b)} The operator $T$ is weakly compact.

\noindent Then $\rho(T) > 0$, the multiplicity of the pole $\rho(T)$ is $1$,
the corresponding eigenspace is one dimensional, and the spectral projection
$P_T$ is $\sigma$-order continuous.
\end{lemma}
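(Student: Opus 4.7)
The strategy is to reduce all four conclusions to a single mechanism. If $S$ is any nonzero positive $\sigma$-order continuous operator commuting with $T$, then its absolute kernel $N_S := \{x \in X : S|x| = 0\}$ is a $T$-invariant ideal (from $S|Tx| \leq ST|x| = TS|x| = 0$ for $x \in N_S$), and $\sigma$-order continuity of $S$ makes $N_S$ $\sigma$-order closed. I will use hypothesis (a) or (b) to upgrade $N_S$ from a $\sigma$-order closed ideal to a band, whereupon band-irreducibility forces $N_S \in \{0, X\}$. The four conclusions follow from appropriate choices of $S$.

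\emph{Steps 1 and 2.} If $\rho(T) = 0$ then $T$ is nilpotent, since $0$ is a pole and the only spectral point; for the least $m$ with $T^m = 0$ (and $m \geq 2$ because $T \neq 0$ on a $\dim X > 1$ band-irreducible lattice), taking $S = T$ gives $T^{m-1}(X^+) \subset N_T \neq X$, contradicting the framework. So $\rho := \rho(T) > 0$. Let $k$ be the pole order and set
$$Q := (T - \rho I)^{k-1} P_T = \lim_{\lambda \downarrow \rho}(\lambda - \rho)^k R(\lambda, T),$$
which is positive because $R(\lambda, T) = \sum_n \lambda^{-n-1} T^n \geq 0$ for $\lambda > \rho$, nonzero by the pole order, commutes with $T$, and is $\sigma$-order continuous (norm limits of positive $\sigma$-order continuous operators remain $\sigma$-order continuous, using monotonicity of the lattice norm on $X^+$). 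Under (b), $Q$ is moreover weakly compact as a norm limit of weakly compact operators. Taking $S = Q$, the framework forces $N_Q = 0$, i.e., $Q$ is strictly positive. But since $P_T^2 = P_T$ commutes with $(T - \rho I)^{k-1}$,
$$Q^2 = (T - \rho I)^{2(k-1)} P_T = 0 \quad \text{whenever } k \geq 2;$$
for any $x$ with $Qx \neq 0$, at least one of $Qx^+, Qx^-$ is then a nonzero element of $N_Q$, a contradiction. Hence $k = 1$ and $P_T = Q$ is strictly positive.

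\emph{Steps 3 and 4.} Pick any $x \in X^+$ with $u := P_T x \neq 0$, so $u$ lies in $E := \ker(T - \rho I)$. Using $\sigma$-order continuity of $T$ on the principal-band sup $|y| \wedge nu \uparrow |y|$ (for $y \in \{u\}^{dd}$), one checks that $\{u\}^{dd}$ is a nontrivial $T$-invariant band, so by band-irreducibility $u$ is a weak order unit. A standard Krein--Rutman style argument then yields $\dim E = 1$: for any positive $v \in E$ independent of $u$, set $t_0 := \sup\{t \geq 0 : tv \leq u\}$ (finite since $v \neq 0$); then $w := u - t_0 v \in E^+$ is either zero (giving $u \parallel v$) or a weak order unit which, by maximality of $t_0$ applied through the equation $Tw = \rho w$ and the decomposition of $w^\pm$, must be disjoint from a positive multiple of $v$---contradicting its weak-order-unit status. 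Finally, the formula $(\bigstar)$ now applies ($k = 1$, $\dim E = 1$) and writes $P_T$ as a norm limit of $\sigma$-order continuous polynomials in $T$, giving $\sigma$-order continuity of $P_T$.

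\emph{Main obstacle.} The critical step, where hypotheses (a) and (b) play distinct roles, is upgrading ``$\sigma$-order closed $T$-invariant ideal'' to ``band.'' Under (a), I expect the sequential Lorentz seminorm being a norm to allow elements of the band generated by $N_S$ to be approximated sequentially from within $N_S^+$, making $\sigma$-closure coincide with band-closure on ideals. Under (b), weak compactness of $S$ should provide weak---and hence order---convergence of bounded monotone nets in $N_S$. A secondary subtlety is the $T$-invariance of $\{u\}^{dd}$ in a lattice that need not be Dedekind complete, which relies on $\sigma$-order continuity of $T$ and a careful handling of the principal-band description; and showing $\dim E = 1$ without an order-unit hypothesis on $u$ requires the Krein--Rutman argument to be run using only the weak-order-unit property.
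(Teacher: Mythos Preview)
Your proposal has a genuine gap at its load-bearing step. You assert that
\[
Q=\lim_{\lambda\downarrow\rho}(\lambda-\rho)^{k}R(\lambda,T)
\]
is $\sigma$-order continuous because ``norm limits of positive $\sigma$-order continuous operators remain $\sigma$-order continuous, using monotonicity of the lattice norm on $X^{+}$.'' That implication is \emph{false} in general Banach lattices. The paper itself supplies a counterexample (Remark~\ref{r3}): the powers $A^{n}$ from Example~\ref{e2} are order continuous and converge in operator norm to a spectral projection that is not even $\sigma$-order continuous. The preservation result you are reaching for (essentially \cite[Theorem~2.16]{KW}) needs exactly hypothesis~(a), that the sequential Lorentz seminorm be a norm. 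So under~(a) your argument for the $\sigma$-order continuity of $Q$ can be repaired by citing~\cite{KW}, but under~(b) alone it collapses; and your final sentence, deducing $\sigma$-order continuity of $P_{T}$ from formula~$(\bigstar)$, fails for the same reason.

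The paper handles~(b) by a different mechanism that bypasses norm-limit arguments entirely. Assuming $x_{n}\downarrow 0$ but $Q_{-p}x_{n}\geq y\gneqq 0$, one uses $\sigma$-order continuity of $T$ to get $Tx_{n}\downarrow 0$ and weak compactness of $T$ (not of $Q$) to force $Tx_{n}\to 0$ weakly; then $TQ_{-p}x_{n}=Q_{-p}Tx_{n}\to 0$ weakly while $TQ_{-p}x_{n}\geq Ty$, so $Ty=0$, whence $T\{y\}^{dd}=0$ by $\sigma$-order continuity of $T$, contradicting band irreducibility. Note that this argument never needs your ``$\sigma$-order closed ideal $\Rightarrow$ band'' upgrade: it produces a $T$-annihilated band directly. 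Once $Q_{-p}$ is known to be $\sigma$-order continuous, the remaining conclusions are obtained by the argument of \cite[Theorem~6]{Ki}. Your overall framework is close in spirit to that reference, but as written it misidentifies where the obstacle lies under~(b), and the ``Main obstacle'' paragraph leaves both the band-upgrade step and the weak-order-unit Krein--Rutman step as unproved expectations.
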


\begin{proof}
The inequality $\rho(T) >0$ follows trivially from the fact that $T$ is band
irreducible (see~\cite{Ki}).

It was proved in~\cite[Theorem 6 and Lemmas 7 and 8]{Ki} that the conditions
in part $(a)$ imply the conclusion of the lemma.

To prove the conclusion of the lemma in case part (b) holds, assume that $%
\rho(T) = 1$. Let $p$ be the order of the pole at $1$ and let $Q_{-p} = \lim
\limits_{\lambda \downarrow 1} (\lambda -1)^pR(\lambda,T)$ where the limit
is in operator norm. Notice that the operator $Q_{-p}$ is positive and
commutes with $T$. If we prove that $Q_{-p}$ is $\sigma$-order continuous
then we can repeat arguments from the proof of Theorem 6 in~\cite{Ki} and
finish the proof. Assume to the contrary that there is a sequence $x_n \in
X, \; n \in \mathds{N}$ such that $x_n \downarrow 0$ but $Q_{-p} x_n \geq y
\gvertneqq 0$. Notice that $Tx_n \downarrow 0$ and $T$ is weakly compact
whence $Tx_n$ weakly converges to $0$. Therefore $Q_{-p}Tx_n$ weakly
converges to $0$. But $Q_{-p}Tx_n =TQ_{-p}x_n \geq Ty$ whence $Ty = 0$.
Therefore $T\{y\}^{dd} = 0$ in contradiction to $T$ being band irreducible.
\end{proof}

Now we can state and prove our main result.

\begin{theorem}
\label{t1} Let $X$ be a Banach lattice, $A$ and $B$ be positive operators on
$X$ such that $A \leq B$, $B$ is band irreducible, and $\rho(A) = \rho(B) >
0 $. Assume additionally that at least one of the following conditions is
satisfied.

\noindent $(a)$ The sequential Lorentz seminorm $l$ is a norm on $X$, $B$ is
$\sigma$-order continuous, and $\rho(B)$ is a pole of the resolvent $%
R(\lambda,B)$.

\noindent $(b)$ The sequential Lorentz seminorm $l$ is a norm on $X$, $B$ is
$\sigma$-order continuous, $\rho(A)$ is a pole of the resolvent $R(\lambda,
A)$, and the corresponding spectral subspace of $A$ is finite dimensional.

\noindent $(c)$ The operator $B$ is weakly compact, $\sigma$-order
continuous, and $\rho(B)$ is a pole of the resolvent $R(\lambda,B)$.

\noindent $(d)$ The operator $A$ is band irreducible, weakly compact, and $%
\rho(A)$ is a pole of the resolvent $R(\lambda,A)$. The operator $B$ is $%
\sigma$-order continuous.

Then $A = B$.
\end{theorem}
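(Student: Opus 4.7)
The plan is to show $Au = \rho u$ for a positive Perron eigenvector $u$ common to $A$ and $B$ at $\rho$, and then to propagate $A=B$ using band irreducibility. In each of the four cases I would first produce $u>0$ with $Bu=\rho u$ (or $Au=\rho u$) together with a positive $\sigma$-order continuous eigenfunctional $\phi$ of the dual operator with $\phi(u)=1$: in cases (a) and (c), by applying Lemma~\ref{l3} to $B$, which also yields that the spectral projection of $B$ at $\rho$ has the form $P_B=\phi(\cdot)u$; in case (d) by applying Lemma~\ref{l3}(b) to $A$; and in case (b) by using the finite-dimensional spectral subspace of $A$ together with the inequality $R(\lambda,A)\leq R(\lambda,B)$ to transfer pole structure to $B$ and then to apply Lemma~\ref{l3}(a). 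The eigenvector $u$ is then a quasi-interior point of $X^+$: the band $\{u\}^{dd}$ is $B$-invariant because for $x\geq 0$ in this band one has $x=\sup_n(x\wedge nu)$, and $\sigma$-order continuity of $B$ gives $Bx=\sup_n B(x\wedge nu)$, each $B(x\wedge nu)\leq n\rho u$ lying in $I(u)\subset\{u\}^{dd}$; band irreducibility then forces $\{u\}^{dd}=X$.

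From $A\leq B$ we get for $\lambda>\rho$ that $0\leq (\lambda-\rho)R(\lambda,A)\leq (\lambda-\rho)R(\lambda,B)$, and since the right-hand side converges in norm to $P_B$ as $\lambda\downarrow\rho$, the point $\rho$ is at most a simple pole of $R(\lambda,A)$; being in $\sigma(A)$, it is a simple pole with positive residue $P_A$ satisfying $0\leq P_A\leq P_B$ and $P_A\neq 0$. Choose $v\geq 0$ with $u_A:=P_A v\neq 0$; then $Au_A=\rho u_A$ and $0\leq u_A\leq \phi(v)\,u$. Using both eigenvalue equations gives
\begin{equation*}
\phi\bigl((B-A)u_A\bigr) \;=\; \phi(Bu_A)-\phi(Au_A) \;=\; \rho\phi(u_A)-\rho\phi(u_A) \;=\; 0,
\end{equation*}
so the positive vector $(B-A)u_A$ belongs to the null ideal $N(\phi)=\{x\in X:\phi(|x|)=0\}$.

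The main obstacle is now to conclude that $(B-A)u_A=0$. My plan is to show that $\phi$ is strictly positive on $X^+\setminus\{0\}$, by proving that the band hull $N(\phi)^{dd}$ is $B$-invariant, whence band irreducibility forces $N(\phi)^{dd}=\{0\}$. The $B$-invariance of the ideal $N(\phi)$ itself is immediate from $B'\phi=\rho\phi$; the promotion to the band hull is precisely where the four hypotheses intervene distinctively. In (a) and (b), the assumption that the sequential Lorentz seminorm $l$ is a norm on $X$ supplies enough sequential approximation for $\sigma$-order continuity of $B$ to preserve the band hull of a $B$-invariant ideal, while in (c) and (d), weak compactness combined with $\sigma$-order continuity supplies a Dunford--Pettis-style argument in the spirit of the proof of Lemma~\ref{l3}(b).

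Once $(B-A)u_A=0$ is established, $Bu_A=Au_A=\rho u_A$, and applying the band-invariance argument above to $u_A$ shows that $\{u_A\}^{dd}$ is $B$-invariant, so band irreducibility makes $u_A$ a quasi-interior point. The closed ideal $J=\{x\in X:(B-A)|x|=0\}$ then contains the dense principal ideal $I(u_A)$, whence $J=X$, that is, $A=B$. The technical heart of the proof is thus the strict positivity of $\phi$, or equivalently the promotion of $B$-invariance from the ideal $N(\phi)$ to its band hull under $\sigma$-order continuity alone, which is secured by the hypothesis specific to each of (a)--(d).
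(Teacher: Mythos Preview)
Your overall architecture---produce a positive eigenvector and a positive $\sigma$-order continuous eigenfunctional $\phi$, then kill $(B-A)u_A$ by showing $\phi$ is strictly positive---is reasonable, but the step you yourself flag as the technical heart is not actually carried out, and it does not follow from the ingredients you list. Knowing that $N(\phi)$ is a $B$-invariant ideal and that $B$ is $\sigma$-order continuous does \emph{not} give that $N(\phi)^{dd}$ is $B$-invariant: for $x\in N(\phi)^{dd}$ there is in general no increasing sequence from $N(\phi)$ with supremum $x$, and the hypothesis that the sequential Lorentz seminorm is a norm says nothing about approximation from within a prescribed ideal. The paper avoids this obstacle entirely: it never tries to show a functional is strictly positive. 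Instead it manufactures a \emph{single} element $w=P_Bg-g\geq 0$ (with $g$ a positive eigenvector of a perturbation $B_\varepsilon=\varepsilon A+(1-\varepsilon)B$) satisfying $0\leq Bw\leq w$; for such a $w$, $\sigma$-order continuity of $B$ does imply $B\{w\}^{dd}\subseteq\{w\}^{dd}$ (via $x\wedge nw\uparrow x$), so band irreducibility forces $\{w\}^{dd}=X$. Since $P_Bw=0$ and $P_B$ is $\sigma$-order continuous (Lemma~\ref{l3}), $P_B$ vanishes on $\{w\}^{dd}=X$, a contradiction. This is the missing idea.

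There are two further concrete problems. In case~(b) you propose to ``transfer pole structure to $B$'' from the inequality $R(\lambda,A)\leq R(\lambda,B)$, but this inequality only bounds $R(\lambda,B)$ from \emph{below} and gives no control on its blow-up at $\rho$; the paper instead uses perturbation theory (Dunford--Schwartz VII.6.8) on $B_\varepsilon=(1-\varepsilon)A+\varepsilon B$, which is norm-close to $A$ and inherits a pole at $\rho$, is $\sigma$-order continuous (since $\leq B$) and band irreducible (since $B$ is a linear combination of $A$ and $B_\varepsilon$), and then applies part~(a) to the pair $A\leq B_\varepsilon$. In case~(d), applying Lemma~\ref{l3} to $A$ produces an eigenfunctional $\phi$ of $A'$, not of $B'$, so the line $\phi(Bu_A)=\rho\phi(u_A)$ in your displayed computation is unjustified; the paper again perturbs to obtain a pole for $B$ (or $B_\varepsilon$) and runs the $w=P_Bg-g$ argument, this time using that $P_A$ is $\sigma$-order continuous and $0\leq P_A\leq P_B$ to reach the contradiction $P_A=0$.
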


\begin{proof}
Part $(a)$. By part $(a)$ of Lemma~\ref{l3} we have $\rho(B) >0$ and the
corresponding eigenspace is one dimensional. Let $P$ be the corresponding
spectral projection. Without loss of generality we can assume that $\rho(B)
= 1$. For a positive $\varepsilon$ let $B_\varepsilon = \varepsilon A + (1 -
\varepsilon)B$. Notice that $A \leq B_\varepsilon \leq B$ and therefore $%
\rho(B_\varepsilon) =1$. Then (see~\cite[Lemma VII.6.8]{DS}) for a small
enough $\varepsilon$ the point $1$ will be an isolated eigenvalue of $%
B_\varepsilon$ of multiplicity $1$. Let $Q$ be the corresponding spectral
projection. The formula $(\bigstar)$ applied to the projection $Q$ provides $%
Q = \lim \limits_{\lambda \downarrow 1} (\lambda - 1) \sum
\limits_{n=0}^\infty \frac{(B_\varepsilon)^n}{\lambda^{n+1}}$ whence $Q \geq
0$ and therefore there is $g \in X$, $g \gvertneqq 0$, and $B_\varepsilon g=
g$.

Next we notice that $Bg \geq B_\varepsilon g = g$. Assume for a moment that $%
Bg \neq g$. The formula $(\bigstar)$ and the inequality $B \geq
B_\varepsilon $ guarantee that $P \geq Q$ whence $Pg \geq Qg =g$. Notice
that our assumption $Bg \neq g$ implies that $Pg \neq g$ because $BPg = Pg$.
Next we see that $B(Pg - g) = Pg - Bg \geq 0$ whence $Pg \geq Bg \geq g$.
From the last inequality we have $-g \geq -Bg$ and therefore $Pg - g \geq Pg
- Bg = B(Pg - g) \geq 0$. We see now that operator $B$ leaves invariant the
principal ideal generated by $Pg - g$ and because $B$ is $\sigma$-order
continuous it leaves invariant the nonzero principal band $\mathfrak{B}$
generated by $Pg -g$. But $B$ is band irreducible whence $\mathfrak{B} = X$.
Next notice that $P(Pg - g) = 0$ and that the operator $P$ is positive and $%
\sigma$-order continuous by Lemma~\ref{l3} whence $P=0$, a contradiction.

We have just proved that $Bg = B_\varepsilon g = g$. The operator $B -
B_\varepsilon$ is $\sigma$-order continuous because $0 \leq B -
B_\varepsilon \leq B$ and therefore $B - B_\varepsilon$ is zero on the
principal band $\mathfrak{C} = \{g\}^{dd}$ generated by $g$. But it follows
immediately from $Bg = g$ and the fact that $B$ is $\sigma$-order continuous
that $B\mathfrak{C} \subseteq \mathfrak{C}$. Recalling that $B$ is band
irreducible we see that $\mathfrak{C} = X$ whence $B_\varepsilon = B$ and
therefore $A = B$.

Part $(b)$. Assume again that $\rho(A) = \rho(B) = 1$. For a positive $%
\varepsilon$ consider the operator $B_\varepsilon = (1 - \varepsilon)A +
\varepsilon B$. Then $A \leq B_\varepsilon \leq B$ whence $%
\rho(B_\varepsilon) = 1$. Lemma VII.6.8 in ~\cite{DS} guarantees that for a
small enough $\varepsilon$, $1$ is a pole of the resolvent $R(\lambda,
B_\varepsilon)$. Next notice that the operator $B_\varepsilon$ is $\sigma$%
-order continuous because $B_\varepsilon \leq B$. We claim that $%
B_\varepsilon$ is band irreducible. Indeed, let $\mathfrak{B}$ be a
nontrivial $B_\varepsilon$ invariant band in $X$. Then $A \mathfrak{B}
\subseteq \mathfrak{B}$ because $A \leq B_\varepsilon$. But $B$ is a linear
combination of $A$ and $B_\varepsilon$ whence $B\mathfrak{B} \subseteq
\mathfrak{B}$ in contradiction to our assumption that $B$ is band
irreducible.

By part $(a)$ we have $A = B_\varepsilon$ whence $A= B$.

Part $(c)$. We can assume that $\rho(B) = 1$. Our assumptions about the
operator $B$ and part $(b)$ of Lemma~\ref{l3} guarantee that $1$ is an
eigenvalue of $B$ of multiplicity one and that the corresponding
one-dimensional spectral projection $P$ is $\sigma$-order continuous. The
rest of the proof is the same as in part $(a)$.

Part $(d)$. Assume $\rho (A)=\rho (B)=1$. As in part $(c)$ our assumptions
guarantee that $1$ is the pole of $\rho (\lambda ,A)$ of multiplicity one
and that the corresponding one-dimensional spectral projection $P_{A}$ is $%
\sigma $-order continuous. Let $g$ be the corresponding positive
eigenvector. Substituting, if needed, $B$ by $B_{\varepsilon
}=(1-\varepsilon )A+\varepsilon B$ where $\varepsilon $ is a small enough
positive number we can assume that $1$ is the pole of $\rho (\lambda ,B)$
also of multiplicity one. Let $P_{B}$ be the corresponding one-dimensional
spectral projection. Notice that $Bg\geq Ag=g$. Assume that $Bg\gneqq g$.
Then exactly as in part (a), we conclude that $P_{B}g\geq Bg\geq g\geq 0$
and that $P_{B}g-g\geq B(P_{B}g-g)\geq A(P_{B}g-g)\geq 0$. Since both $A$
and $B$ are $\sigma $-order continuous they both leave invariant the
principal band $\mathfrak{B}=\{P_{B}g-g\}^{dd}$. But $A$ and $B$ are band
irreducible. Therefore $\mathfrak{B}=X$. However, since $P_{B}\geq P_{A}\geq
0$ and $P_{B}(P_{B}g-g)=0$, we have $P_{A}(P_{B}g-g)=0$. We already noted
that Lemma 0.8 implied that $P_{A}$ is $\sigma $-order continuous. Therefore
$P_{A}(\mathfrak{B})=P_{A}(X)=\{0\}$. This is a contradiction since $P_{A}$
is non-zero.
\end{proof}

\begin{remark}
\label{r5} We can further weaken the conditions in parts $(c)$ and $(d)$ of
Theorem~\ref{t1} (Compare with Theorem 6 in~\cite{Ki} and with ~\cite{Ki1}).
Namely, in part $(c)$ instead of assuming that $B$ is $\sigma $-order
continuous and weakly compact we can assume that there are positive
operators $R$ and $S$ such that $R$ is weakly compact, $S\leq R$ and $S\neq 0
$, $SB\leq BS$, and either $R$ is order continuous, or $R$ is $\sigma $%
-order continuous and $S$ is band irreducible (in connection with this see~%
\cite{Ki1}). Similar changes can be made for part $(d)$. (See the appendix.)
\end{remark}

\noindent We will now discuss in more detail the conditions in the statement
of Theorem~\ref{t1}.

Concerning parts(a) and (b), we showed already in Example~\ref{e1} that the
condition that $B$ is $\sigma$-order continuous cannot be omitted. But to
show that we cannot dispense with the condition that the sequential Lorentz
seminorm is a norm on $X$ requires a more subtle example. For this purpose
we will use and modify the example from ~\cite[Proposition 13.5]{AAK} (See
also ~\cite[Section 3]{KW}).

\begin{example}
\label{e2} There are a Banach lattice $X$ (on which the sequential Lorentz
seminorm is not a norm) and positive operators $A$ and $B$ on $X$ such that

\begin{itemize}
\item $0 \leq A \leq B$.

\item $\rho(A)=\rho(B) =1$.

\item $\rho(B)$ is a pole of the resolvent $R(\lambda,B)$ of multiplicity
one.

\item $B$ is order continuous.

\item $B$ is band irreducible.
\end{itemize}

However, $A \neq B$.
\end{example}

\begin{proof}
Part 1. In this part we will construct the Banach space $X$ and the operator
$A$. Let $\psi_0$ denote a function on $[0,1]$ defined as follows:
\begin{equation*}
\psi_0(t) =
\begin{cases}
2t & \text{if $t \in [0,1/2]$,} \\
2 - 2t & \text{if $t \in [1/2,1]$},%
\end{cases}%
\end{equation*}
Using this function $\psi_0$, we define a mapping $\psi: [0,1] \to [0,1]$ by
letting $\psi(0) = 0$ and letting $\psi(t) = 2^{-n}\psi_0(2^nt-1)$ for $t
\in [2^{-n}, 2^{-n+1}]$ and $n \in \mathds{N}$. The graph of $\psi$ is shown
below.

\medskip \centerline{\epsffile{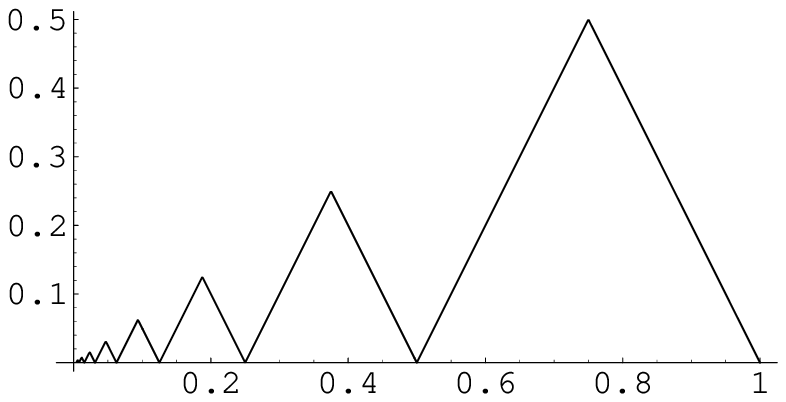}} \medskip Next we define the
mapping $\varphi : [0,2] \to [0,2]$ by letting
\begin{equation*}
\varphi(t) =
\begin{cases}
\psi(t) & \text{if $t \in [0,1]$,} \\
\psi(2 - t) & \text{if $t \in [1,2]$},%
\end{cases}%
\end{equation*}
We introduce the composition operator $T_\varphi$ on $C[0,2]$ by the formula
$T_\varphi f = f\circ \varphi, \; f \in C[0,2]$. We will describe
recursively the sets $\varphi^{-n}(0), \; n \in \mathds{N}$. Clearly $%
\varphi^{-1}(0) = \{0\} \cup \{1/2^k, \; k \in \mathds{N} \} \cup \{1\} \cup
\{2 -1/2^k, k \in \mathds{N}\} \cup \{2\}$. Let $n \in \mathds{N}$ and let $%
(a,b)$ be an interval complementary to $\varphi^{-n}(0)$ (i.e. $a,b \in
\varphi^{-n}(0)$ and $(a,b) \cap \varphi^{-n}(0) = \emptyset$). Then
\begin{equation*}
(a,b) \cap \varphi^{-(n+1)}(0) = \{\frac{(2^k - 1)a +b}{2^k} : k \in %
\mathds{N} \} \cup \{\frac{a+(2^k -1) b}{2^k} : k \in \mathds{N} \}%
\eqno{(\star \star \star)}
\end{equation*}%
.

From the description above it is easy to see that the mapping $\varphi$ has
the following properties:

\begin{enumerate}
\item For any $n \in \mathds{N}$, $0$ is the only fixed point of the mapping
$\varphi^n$ whence $\varphi^{-n}(0) \subset \varphi^{-(n+1)}(0), \; n \in %
\mathds{N}$.

\item For any $n \in \mathds{N}$, the set $\varphi^{-n}(0)$ is countable.

\item The set $\bigcup \limits_{n=1}^\infty \varphi^{-n}(0)$ is dense in $%
[0,2]$.
\end{enumerate}

We introduce a lattice seminorm $|\| \cdot \||$ on $C[0,2]$ as follows:
\begin{equation*}
|\| f \|| = \sup \limits_{n \in \mathds{N}} \{ \frac{1}{n!} \max \limits_{t
\in \varphi^{-n}(0)}|f(t)| \} \eqno{(\star \star)}
\end{equation*}
The condition $(3)$ above guarantees that this lattice seminorm is actually
a norm on $C[0,2]$. Let $X$ be the completion of $C[0,2]$ in this norm. Then
$X$ is a Banach lattice. Notice that the Lorentz seminorm on $X$ is not a
norm (see Lemma~\ref{l4} below). The obvious inequality $|\| T_\varphi f \||
\leq |\| f \||, f \in C[0,2]$ shows that $T_\varphi$ can be extended as a
continuous operator on $X$. We will denote the extension by $A$. Thus $A$ is
a positive operator on $X$ and $\|A\| \leq 1$. Moreover, the operator $A$
preserves disjointness and therefore it is a lattice homomorphism.

Part 2. We will prove here that the spectrum of $A$ on $X$ is the set $%
\{0,1\}$ and that 1 is an eigenvalue of $A$ of multiplicity 1. Indeed, the
space $X$ is the direct sum of the closed ideal $J$, $J = \{x \in X : x(0) =
0\}$ and the one-dimensional subspace $Y$ generated by the function $%
\mathds{1}$. Notice that $AY = Y$ and $AJ \subset J$. Let $x \in J$ and $k
\in \mathds{N}$, then
\begin{equation*}
|\|A^k x\|| = \sup \limits_{n \in \mathds{N}} \{ \frac{1}{n!} \max
\limits_{t \in \varphi^{-n}(0)}|x(\varphi^k(t))| \}.
\end{equation*}
But because $x \in J$ we have $A^k x \equiv 0$ on $\varphi^{-k}(0)$ and
therefore
\begin{equation*}
|\|A^k x\|| = \sup \limits_{n \geq k+1} \{ \frac{1}{n!} \max \limits_{t \in
\varphi^{-n}(0)}|x(\varphi^k(t))| \} \leq
\end{equation*}
\begin{equation*}
\sup \limits_{n \geq k+1} \{ \frac{1}{n!} \max \limits_{t \in
\varphi^{-n+k}(0)}|x(t)| \} \leq \sup \limits_{n \geq k+1} \{ \frac{1}{%
k!(n-k)!} \max \limits_{t \in \varphi^{-n+k}(0)}|x(t)| \} \leq
\end{equation*}
\begin{equation*}
\frac{1}{k!} |\|x\||.
\end{equation*}
Hence, the restriction $A|J$ is a quasinilpotent operator.

Part 3. We claim that the operator $A$ is order continuous. Let us consider $%
E = \bigcup \limits_{n=1}^\infty \varphi^{-n}(0) $ as a topological space
with the topology of inductive limit. Then every $x \in X$ can be
represented as a continuous function on $E$ and the principal ideal (not
closed!) $I$ in $X$ generated by $\mathds{1}$ can be identified with the
space of all bounded continuous functions on $E$. By the Kreins'-Kakutani
theorem the ideal $I$ is order isomorphic to $C(K)$ for some compact
Hausdorff space $K$. We see that $K$ is homeomorphic to the Stone - \v{C}ech
compactification $\beta E$ of $E$. \footnote{%
'In order to keep the flow of the proof we delay the verification of this
statement and some others to Lemma~\ref{l5}.} Next notice that $AI \subset I$
and that the operator $A$ induces a homomorphism of the algebra $C(\beta E)$
to which corresponds a continuous mapping $\tau$ of $\beta E$ into itself.
It is easy to see that $\tau$ is the unique extension of the map $\varphi :
E \to E$ on $\beta E$ (see Lemma~\ref{l5}).

It follows from the definition of $\varphi$ that if $U$ is a subset of $%
[0,2] $ and $Int U \neq \emptyset$ then $Int \varphi(U) \neq \emptyset$. The
definition of the inductive limit topology on $E$ shows that the map $%
\varphi : E \to E$ has the same property (see the proof of Lemma~\ref{l5}).
Recall that $E$ is a completely regular topological space and as such is
dense in $\beta E$. Therefore $\tau$ being the extension of $\varphi$ also
maps a set with with nonempty interior to a set with nonempty interior
(Lemma~\ref{l5}). Hence by Lemma~\ref{l1} below, $A|I$, the restriction of $%
A $ to the ideal generated by $\mathds{1}$ in $X$, is order continuous. From
this it follows easily that $A$ is order continuous. Indeed, assume to the
contrary that there exist a net $x_\alpha \downarrow 0$ and a $z \in X$, $z
\gvertneqq 0$ such that $Ax_\alpha \geq z$. But then $x_\alpha \wedge %
\mathds{1} \downarrow 0$ and $A(x_\alpha \wedge \mathds{1}) = Ax_\alpha
\wedge A\mathds{1} \geq z \wedge A\mathds{1} = z \wedge \mathds{1}
\gvertneqq 0$ (we have used here that $A$ is lattice homomorphism, $A%
\mathds{1} = \mathds{1}$, and $\mathds{1}$ is a weak unit in $X$ ) in
contradiction to $A|I$ being order continuous.

Part 4. Here we will construct the operator $B$. We need first to construct
a sequence of auxiliary operators on $X$. To do it we will need the
description of the sets $\varphi^{-n}(0), \; n \in \mathds{N}$, from Part 1.
The open subintervals of $[0,2]$ complementary to $\varphi^{-1}(0)$ and
ordered according to their natural order on the real line make an infinite
sequence $\{I_n\}_{n \in \mathds{Z}}$ where $\mathds{Z}$ as usual is the set
of all integers and $I_0 = (1/2,1)$. We will call these intervals -
intervals of the first order. Let us define a homeomorphism $\theta_1$ of $%
[0,2]$ onto itself as follows: $\theta_1$ maps the closure of every interval
$I_n$ onto the closure of its neighbor to the right, $I_{n+1}$, $n \in %
\mathds{Z}$, $\theta_1$ is linear on the closure of each $I_n$, $\theta_1(0)
= 0$ and $\theta_1(2) = 2$.

Now let us consider the set $\varphi^{-2}(0)$. It contains of course all the
points from $\varphi^{-1}(0)$ but also inside every interval of the first
order it has a countable number of new points which accumulate only to the
ends of that interval (see $(\star \star \star))$. Thus inside each interval
of the first order we have a countable sequence of intervals which we call
intervals of the second order. Next we construct the homeomorphism $\theta_2$
of $[0,2]$ onto itself with the following properties

\begin{itemize}
\item $\theta_2$ leaves the closure any interval of the first order
invariant.

\item $\theta_2$ maps the closure of every interval of the second order onto
the closure of its neighbor to the right.

\item $\theta_2$ is linear on the closure of each interval of the second
order.

\item $\theta_2(0) =0$ and $\theta_2(2) = 2$.
\end{itemize}

Continuing this way we construct a sequence $\{\theta_n\}$ of homeomorphisms
of $[0,2]$. The description of the sets $\varphi^{-n}(0)$ and the fact that
the homeomorphism $\theta_k$ leaves the set $\varphi^{-(k-1)}(0)$ invariant
and maps \textit{linearly} every complementary interval of $\varphi^{-k}(0)$
onto its neighbor to the right allow us to conclude that $\theta_i
(\varphi^{-n}(0)) = \varphi^{-n}(0), i,n \in \mathds{N}$. Thus (recall the
definition $(\star \star)$ of the norm on $X$) for any $i \in \mathds{N}$
the composition operator $T_i : T_ix(t) = x(\theta_i (t)), x \in X, t \in
\bigcup \limits_{n=1}^\infty \varphi^{-n}(0)$ is a positive disjointness
preserving isometry of $X$ onto itself. This isometry is obviously
invertible; the inverse isometry is also positive. Therefore $T_i$ is a
lattice isomorphism of $X$ onto itself and as such it is order continuous.

Let
\begin{equation*}
T = \sum \limits_{n, j_1, \ldots j_n \in \mathds{N} , i_1, \ldots , i_n, \in %
\mathds{Z}} \varepsilon_{i_1, \ldots , i_n , j_1, \ldots j_n} T^{i_1}_{j_1}
\ldots T^{i_n}_{j_n}
\end{equation*}
where the strictly positive numbers $\varepsilon_{i_1, \ldots , i_n , j_1,
\ldots j_n}$ are chosen so small that the series above converges in operator
norm. Because all the terms of the series are positive order continuous
operators the operator $T$ is also positive and order continuous (see~\cite[%
Proposition 2.13]{KW}).

We claim that the operator $T$ is band irreducible. Indeed, assume to the
contrary that there is a nontrivial $T$-invariant band $\mathfrak{B}$ in $X$%
. Recalling that if $x \in X$ and $x \equiv 0$ on $\bigcup
\limits_{n=1}^\infty \varphi^{-n}(0)$ then $x = 0$ we see that there are an $%
N \in \mathds{N}$ and two points $u_0, t \in \varphi^{-N}(0)$ such that

\begin{itemize}
\item $u_0$ and $t$ are isolated points of $\varphi^{-N}(0)$.

\item $x(u_0) =0$ for any $x \in \mathfrak{B}$.

\item There is a $z \in \mathfrak{B}$ such that $z(t) \neq 0$.
\end{itemize}

The construction of the mappings $\theta_i, \; i \in \mathds{N}$, shows that
there is an integer $i_1 \in \mathds{Z}$ such that the points $u_1
=\theta_1^{i_1}(u_0)$ and $t$ belong to the same subinterval of $[0,2]$ of
the first order. Next we can find another integer $i_2 \in \mathds{Z}$ such
that the points $u_2 = \theta_2^{i_2}(u_1)$ and $t$ are in the same interval
of order 2. Continuing this way we will the integers $i_1, \ldots , i_{N-1}$
such that the points $u_{N-1} = \theta_{N-1}^{i_{N-1}} \circ \ldots \circ
\theta_1^{i_1}(u_0)$ and $t$ are in the same interval $J$ of order $N-1$
and, being isolated points of $\varphi^{-N}(0)$ they are endpoints of some
subintervals of $J$ of order $N$. The mapping $\theta_N$ shifts these
intervals to the right and thus we can find the integer $i_N \in \mathds{Z}$
such that $\theta_N^{i_N}(u_{N-1}) = t$.

Let $S =T_1^{i_1} \ldots T_N^{i_N}$. Then $Sz(u_0) = z(t) \neq 0$ whence $S
\mathfrak{B} \nsubseteqq \mathfrak{B}$. On the other hand the definition of
the operator $T$ shows that there is a positive constant $C$ such that $S
\leq CT$ and therefore the band $\mathfrak{B}$ must be $S$-invariant, which
yields a contradiction.

Let $l$ be the function $l(t) =t, t \in [0,2]$ and $L$ be the corresponding
multiplication operator on $X$. Fix a positive number $\varepsilon$ and
consider $B = A + \varepsilon LT$. Then $B$ is order continuous band
irreducible positive operator on $X$. If $\varepsilon$ is small enough then~%
\cite[IV.3.5]{Ka} the disk $\{z \in \mathds{C} : |z-1| \leq 1/2\}$ contains
only one point of the spectrum $\sigma(B)$ and that point is an eigenvalue
of $B$ of multiplicity one. Let $\delta_0(x) =x(0), x \in X$. Then $B^\prime
\delta_0 = \delta_0$ whence $\rho(B) =1$ and we are done.
\end{proof}

The following three lemmas were used in the construction of Example~\ref{e2}%
. The last of them is most probably known but we were not able to find it in
the literature so we provide its proof for the sake of completeness.

\begin{lemma}
\label{l4} Let $X$ be the Banach lattice constructed in the proof of Example~%
\ref{e2}. Then the sequential Lorentz seminorm $l$ on $X$ is not a norm.
\end{lemma}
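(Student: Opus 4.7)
The plan is to show that $l(\mathds{1}) = 0$, which will establish that $l$ is not a norm since $\mathds{1} \neq 0$. For each positive integer $N$ I will produce an increasing sequence $\{x_n^{(N)}\}_{n \in \mathds{N}}$ in $X^+$ with $x_n^{(N)} \uparrow \mathds{1}$ (lattice supremum in $X$) and $\lim_n \|x_n^{(N)}\| = 1/(N+1)!$; this gives $l(\mathds{1}) \le 1/(N+1)!$ for every $N$, so $l(\mathds{1}) = 0$.

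Set $x_n^{(N)}(t) = 1 - \exp(-n\cdot\operatorname{dist}(t,\varphi^{-N}(0)))$ for $t \in [0,2]$. Each $x_n^{(N)} \in C[0,2] \subset X$, is nonnegative, bounded by $\mathds{1}$, and increasing in $n$. To identify the lattice supremum in $X$, observe that the pointwise limit $s^{(N)}$ on $E$ equals $0$ on $\varphi^{-N}(0)$ and $1$ on $E \setminus \varphi^{-N}(0)$; this function does not itself belong to $X$, since its restriction to $\varphi^{-(N+1)}(0)$ is discontinuous at every point of $\varphi^{-N}(0)$. Suppose $y \in X$ satisfies $y \ge x_n^{(N)}$ for all $n$; then $y \ge 1$ on $E \setminus \varphi^{-N}(0)$. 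The recursive description $(\star\star\star)$ implies that every point of $\varphi^{-N}(0)$ is an accumulation point in $[0,2]$ of $\varphi^{-(N+1)}(0) \setminus \varphi^{-N}(0)$: for points that are endpoints of complementary intervals of $\varphi^{-N}(0)$ this is immediate from $(\star\star\star)$, and for the remaining accumulation points of $\varphi^{-N}(0)$ (such as $0$, $1$, $2$) one uses the shrinking family of complementary intervals clustering at those points. Continuity of $y|_{\varphi^{-(N+1)}(0)}$ then forces $y \ge 1$ on $\varphi^{-N}(0)$ as well, so $y \ge \mathds{1}$ on all of $E$; hence $\mathds{1}$ is the least upper bound in $X$.

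For the norm estimate, $x_n^{(N)}$ vanishes on $\varphi^{-N}(0) \supseteq \varphi^{-k}(0)$ for every $k \le N$, so those levels contribute $0$. For $k > N$ and any $t \in \varphi^{-k}(0) \setminus \varphi^{-N}(0)$ one has $\operatorname{dist}(t,\varphi^{-N}(0)) > 0$, hence $x_n^{(N)}(t) \to 1$ as $n \to \infty$, giving $\max_{\varphi^{-k}(0)} x_n^{(N)} \to 1$. Since $\|x_n^{(N)}\|$ is increasing in $n$, interchanging limit and supremum yields $\lim_n \|x_n^{(N)}\| = \sup_{k > N} 1/k! = 1/(N+1)!$. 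Combining the two steps, $l(\mathds{1}) \le 1/(N+1)! \to 0$, so $l(\mathds{1}) = 0$. The main difficulty is the first step: the pointwise supremum $s^{(N)}$ lies strictly below $\mathds{1}$ on $\varphi^{-N}(0)$, yet no element of $X$ can be squeezed strictly between $s^{(N)}$ and $\mathds{1}$; it is precisely this non-pointwise character of the lattice supremum in $X$, stemming from the accumulation structure of the sets $\varphi^{-n}(0)$, that causes Fatou-type estimates to fail and $l$ to fail to be a norm.
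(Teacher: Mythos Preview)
Your argument is correct and follows the same approach as the paper: exhibit, for each $N$, an increasing sequence in $C[0,2]$ vanishing on $\varphi^{-N}(0)$ whose lattice supremum in $X$ is $\mathds{1}$, and use $(\star\star)$ to bound the norms by $1/(N+1)!$. The paper states the existence of such sequences and the supremum claim without further comment; your version supplies a concrete choice $x_n^{(N)}(t)=1-\exp(-n\,\mathrm{dist}(t,\varphi^{-N}(0)))$ and an explicit verification (via the accumulation property from $(\star\star\star)$ and continuity of elements of $X$ on each $\varphi^{-k}(0)$) that $\mathds{1}$ really is the least upper bound in $X$, which the paper leaves implicit.
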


\begin{proof}
Fix $n \in \mathds{N}$. The set $\varphi^{-n}(0)$ is a closed countable
subset of $[0,2]$ and therefore we can find a sequence $\{f_{n,k}\}_{k \in %
\mathds{N}}$ of non-negative functions in $C[0,2]$ such that $f_{n,k} \equiv
0 $ on $\varphi^{-n}(0), \; k \in \mathds{N}$ and $f_{n,k} \mathop \uparrow
\limits_{k \to \infty} \mathds{1}$. But the definition of norm on $X$ (see $%
(\star \star)$) shows that $|\|f_{n,k}\|| \leq 1/(n+1)!, \; k \in \mathds{N}$%
, and according to the definition of the Lorentz seminorm $l$ (see $(\star)$%
) we have $l(\mathds{1}) = 0$.
\end{proof}

\begin{lemma}
\label{l5} Let $X$ be the Banach lattice constructed in the proof of Example~%
\ref{e2} and $E = \bigcup \limits_{n=1}^\infty \varphi^{-n}(0)$ be the
topological space endowed with the topology of inductive limit of the
compact spaces $\varphi^{-n}(0)$ (considered as closed subsets of $[0,2]$).
Then the principal ideal $I$ in $X$ generated by the function $\mathds{1}$, $%
I = \{x \in X : \; \exists \; c > 0\, \text{such that}\; |x| \leq c %
\mathds{1}\}$, is isometrically and lattice isomorphic to the space $C(\beta
E)$ of all continuous functions on the Stone - \u{C}ech compactification of $%
E$. Moreover, the mapping $\varphi : E \to E$ allows the unique continuous
extension $\tau : \beta E \to \beta E$ and if $H$ is a closed subset of $%
\beta E$ such that $Int \, H \neq \emptyset$ then $Int \, \tau(H) \neq
\emptyset$.
\end{lemma}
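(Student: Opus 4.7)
The plan is to establish the three assertions of the lemma in sequence. For the isomorphism $I\cong C(\beta E)$, I would identify $I$ with $C_b(E)$, the bounded continuous functions on $E$ in the inductive limit topology. Any $x\in X$, realized as a $|\|\cdot\||$-limit of $x_k\in C[0,2]$, restricts uniformly on each compact set $\varphi^{-n}(0)$ via the estimate $(1/n!)\sup_{\varphi^{-n}(0)}|x_k-x_j|\le|\|x_k-x_j\||$, so it induces a well-defined $\hat x$ on $E$ whose restriction to each $\varphi^{-n}(0)$ is continuous---equivalently, $\hat x$ is continuous in the inductive limit topology---and bounded by $c$ whenever $|x|\le c\mathds{1}$. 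Conversely, given $g\in C_b(E)$, Tietze-extend each $g|_{\varphi^{-n}(0)}$ to $\tilde g_n\in C[0,2]$ with $\|\tilde g_n\|_\infty\le\|g\|_\infty$; since $\tilde g_n$ and $\tilde g_m$ both equal $g$ on $\varphi^{-\min(n,m)}(0)$, one has $|\|\tilde g_n-\tilde g_m\||\le 2\|g\|_\infty/(\min(n,m)+1)!$, so $(\tilde g_n)$ is $|\|\cdot\||$-Cauchy with limit $x\in I$ satisfying $\hat x=g$. The resulting bijection is a Banach lattice isometry for the AM-norm $\|x\|_I=\inf\{c>0:|x|\le c\mathds{1}\}$ on $I$ and the sup norm on $C_b(E)$; since $\beta E$ is by definition the Gelfand spectrum of $C_b(E)$, one obtains $I\cong C_b(E)\cong C(\beta E)$.

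For the existence and uniqueness of $\tau$, continuity of $\varphi:E\to E$ in the inductive limit topology makes $T_\varphi:g\mapsto g\circ\varphi$ a unital lattice (and algebra) homomorphism of $C_b(E)$; its Gelfand dual is a continuous $\tau:\beta E\to\beta E$ satisfying $\tau(\delta_t)(g)=g(\varphi(t))=\delta_{\varphi(t)}(g)$, so $\tau|_E=\varphi$. Uniqueness is immediate from density of $E$ in the Hausdorff space $\beta E$.

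For the interior-preservation property, I would first observe that $\varphi$ has the analogous property on $[0,2]$: it is piecewise affine with nonzero slopes, so each linear piece is an open map and any nonempty open set contains a subinterval lying in a single linear piece whose image is an open interval. The crucial local identity is $\varphi(U\cap E)=\varphi(U)\cap E$ for any open $U$ contained in a single linear piece: this holds because each linear piece has dyadic-rational slope and intercept, $E$ consists of dyadic rationals (from $(\star\star\star)$), and the equivalence $t\in\varphi^{-n}(0)\iff\varphi(t)\in\varphi^{-(n-1)}(0)$ combined with bijectivity of $\varphi$ on $U$ forces $t\in E$ precisely when $\varphi(t)\in E$. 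Given $H\subset\beta E$ closed with $W:=\mathrm{Int}\,H\ne\emptyset$: by complete regularity of $E$, the inductive limit topology coincides with the subspace topology from $\beta E$, so $W\cap E$ is nonempty and open. Assuming one can extract an open interval $(c,d)\subset[0,2]$ lying in a single linear piece of $\varphi$ with $(c,d)\cap E\subset W\cap E$, set $(a,b):=\varphi(c,d)$; then $\tau(H)\supset\varphi((c,d)\cap E)=(a,b)\cap E$, and since $\tau(H)$ is closed (the continuous image of the compact set $H$), $\tau(H)\supset\overline{(a,b)\cap E}^{\beta E}$, which in turn contains the nonempty open set $\{p\in\beta E:\tilde h(p)>0\}$ for $h(t):=\mathrm{dist}(t,[0,2]\setminus(a,b))\in C_b(E)$, yielding $\mathrm{Int}\,\tau(H)\ne\emptyset$.

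The main obstacle is producing the interval trace $(c,d)\cap E\subset W\cap E$, since the inductive limit topology can in principle be strictly finer than the subspace topology $E$ inherits from $[0,2]$. I expect to resolve this by choosing a non-critical $t_0\in W\cap E$ and analysing the level-by-level neighborhoods $(t_0-\delta_m,t_0+\delta_m)\cap\varphi^{-m}(0)\subset W$ guaranteed by openness at each $m$; a careful use of the recursion $(\star\star\star)$ should yield $\inf_m\delta_m>0$ at some judicious $t_0$, giving the trace. In the worst case (where the inductive neighborhoods shrink at every point of $W\cap E$), a fallback exploits the finite-to-one structure of $\varphi$ and the spreading-out behaviour across levels to show that $\varphi(W\cap E)$ is nevertheless dense in some nonempty open subset of $E$, which still suffices via the same closure argument to produce nonempty interior in $\beta E$.
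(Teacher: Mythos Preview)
Your treatment of the first two assertions (the identification $I\cong C_b(E)\cong C(\beta E)$ via Tietze extensions and a Cauchy-sequence argument, and the construction of $\tau$ as the Gelfand dual of $T_\varphi$) is essentially the same as the paper's. One small omission: you invoke ``complete regularity of $E$'' and the density of $E$ in $\beta E$ without justification. The paper proves complete regularity explicitly by an inductive Urysohn/Tietze construction across the levels $\varphi^{-n}(0)$; you should at least acknowledge that this needs checking, since inductive limits of completely regular spaces are not automatically completely regular.

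The real issue is your interior-preservation argument. You have correctly located the obstacle: your plan requires an interval trace $(c,d)\cap E\subset W\cap E$, which amounts to asking that the inductive-limit topology on $E$ agree locally with the subspace topology from $[0,2]$. You flag this as ``the main obstacle'' and offer only a speculative resolution (finding $t_0$ with $\inf_m\delta_m>0$, or a vague ``spreading-out'' fallback). This is a genuine gap, and the paper's proof shows how to sidestep it rather than confront it. The paper never compares the two topologies. Instead it removes a closed countable set $F$ of critical points of $\varphi$ (the points of $\varphi^{-1}(0)$ together with the midpoints of its complementary intervals), sets $W=V\setminus F$, and argues \emph{level by level}: for each $n$, the set $W_n=W\cap\varphi^{-n}(0)$ is open in $\varphi^{-n}(0)$ (this is exactly what the inductive-limit topology provides, with no uniformity in $n$ required), and since $\varphi$ is a local homeomorphism away from $F$, the image $\varphi(W_n)$ is open in $\varphi^{-(n-1)}(0)$. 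Because $\varphi(W)\cap\varphi^{-m}(0)=\varphi(W_{m+1})$, this shows $\varphi(W)$ is open in $E$ directly. The passage from $E$ to $\beta E$ is then a short closure/density argument close to what you wrote. You should replace the search for an interval trace with this level-wise computation; it makes the proof go through without any case analysis or hopeful estimates.
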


\begin{proof}
The definition $(\star \star )$ of the norm on $X$ shows that every $x\in X$
is a function on $E$, completely defined by its values on $E$ and continuous
on every set $\varphi ^{-n}(0)$, $n\in \mathds{N}$. The topology of
inductive limit on $E$ is the finest (strongest) topology on $E$ such that
every function continuous on $\varphi ^{-n}(0),\;n\in \mathds{N}$, is
continuous on $E$ (see e.g. ~\cite[page 44]{Ha}). Therefore $X$ is a proper
subspace of $C(E)$. We claim that if $f$ is a bounded continuous function on
$E$ then $f\in X$. Indeed, for every $n\in \mathds{N}$ we can find a
function $f_{n}\in C[0,2]$ such that $f_{n}\equiv f$ on $\varphi ^{-n}(0)$
and $\Vert f_{n}\Vert _{C[0,2]}\leq \sup\limits_{t\in E}|f(t)|$. It is
obvious from $(\star \star )$ that $\{f_{n}\}$ is a Cauchy sequence in $X$
and therefore converges to some $x\in X$; but $f\equiv x$ on $E$ whence $f=x$%
. Now we see that the principal ideal $I$ generated in $X$ by the function $%
\mathds{1}$ coincides with the algebra of all bounded continuous functions
on $E$ which can be identified with the space $C(\beta E)$.

To see this, it is sufficient to note that $E$ is completely regular. In
fact, let $F$ be a closed subset of $E$ and $t\in E\smallsetminus F$. For
some $n$, $t\in\varphi^{-n}(0)$. Let $f_{n}:\varphi^{-n}(0)\rightarrow%
\lbrack0,1]$ be a continuous Urysohn function such that $f_{n}(t)=1$ and $%
f_{n}(\varphi ^{-n}(0)\cap F)=\{0\}$. Let
\begin{equation*}
\widetilde{f_{n}}:\varphi^{-n}(0)\cup(\varphi^{-(n+1)}(0)\cap F)\rightarrow
\lbrack0,1]
\end{equation*}
be defined by%
\begin{equation*}
\widetilde{f_{n}}(s)=f_{n}(s)\text{, }s\in\varphi^{-n}(0)\text{ and}%
\widetilde{f_{n}}(s)=0\text{, }s\in(\varphi^{-(n+1)}(0)\cap
F)\smallsetminus\varphi^{-n}(0)\text{.}
\end{equation*}
It is straight forward to show that the function$\widetilde{f_{n}}$ defined
as above on the indicated closed subset of $\varphi^{-(n+1)}(0)$ is
continuous. Then by Tietze's extension theorem there is a continuous
function $f_{n+1}:\varphi^{-(n+1)}(0)\rightarrow\lbrack0,1]$ that extends$%
\widetilde {f_{n}}$. But this means it also extends $f_{n}$. Hence by
induction we construct a sequence of continuous functions $\{f_{n}\}$ such
that, for each $n$, $f_{n}:\varphi^{-n}(0)\rightarrow\lbrack0,1]$ with $%
f_{n}(t)=1$ and $f_{n}(\varphi^{-n}(0)\cap F)=\{0\}$ and $f_{n+1}$ is an
extension of $f_{n}$. Now we define a function $f:E\rightarrow\lbrack0,1]$
by $f(s)=f_{n}(s)$ for each $s\in E$ whenever $s\in\varphi^{-n}(0)$ for some
$n$. Clearly $f$ is well defined and satisfies $f(t)=1$ and $f(F)=\{0\}$.
Furthermore $f$ is continuous since $f_{|\varphi^{-n}(0)}=f_{n}$ is
continuous for each $n$. Whence $E$ is completely regular.

Returning to $C(\beta E)$ we notice that the operator $T_\varphi$ is a
unital homomorphism of this algebra and therefore in generates a continuous
map $\tau : \beta E \to \beta E$ such that the restriction of $\tau$ on $E$
coincides with $\varphi$. Let $V$ be an open nonempty subset of $E$. The
definition of inductive (or direct) limit topology (see again~\cite[page 44]%
{Ha}) shows that for any $n \in \mathds{N}$ the set $V \cap \varphi^{-n}(0)$
is an open subset of $\varphi^{-n}(0)$. Let $N \in \mathds{N}$ be the
smallest natural number such that $V \cap \varphi^{-N}(0) \neq \emptyset$.
Consider the subset $F$ of $E$ that consists of all the points from $%
\varphi^{-1}(0)$ and of all the points of the form $\frac{a+b}{2}$ where $a,
b \in \varphi^{-1}(0)$ and $(a,b) \cap \varphi^{(-1)}(0) = \emptyset$. We
can easily see that $F \subset \varphi^{(-2)}(0)$ is a closed subset of $%
[0,2]$ and of $E$ whence $W = V \setminus F$ is open in $E$.

Notice that it follows from $(\star \star \star)$ that every point from $%
\varphi^{-N}(0)$ is limit of a sequence of points from $\varphi^{-(N+1)}(0)
\setminus \varphi^{-N}(0)$. Therefore $\varphi^{-M}(0) \cap W \neq \emptyset$
where $M = \max{(3, N +1)}$. Let us now fix $n \in \mathds{N}$ such that $%
W_n = W \cap \varphi^{-n}(0) \neq \emptyset$. Let $G$ be an open subset of $%
[0,2] \setminus F$ such that $W_n = G \cap \varphi^{-n}(0)$. Then $G$ is the
union of disjoint open intervals $(c_j, d_j), \; j \in \mathds{N}$, in $%
[0,2] \setminus F$. Since $(c_j, d_j) \cap F = \emptyset$, the construction
of the map $\varphi$ shows that $\varphi((c_j,d_j))$ is an open subinterval
of $[0,1/2]$ (see the graph in part 1 of Example~\ref{e2}). Thus $%
\varphi(W_n) = \big{(} \bigcup \limits_{j=1}^\infty \varphi((c_j,d_j)) %
\big{)} \cap \varphi^{-(n-1)}(0)$ is an open nonempty subset of $%
\varphi^{-(n-1)}(0)$.

We have just proved that the set $\varphi(W)$ is non-empty and open in $E$
whence $Int \, \varphi(V) \neq \emptyset$. It remains to see that $\tau$
inherits the same property.

Let $U$ be a non-empty open subset of $\beta E$ and let $R=U\cap E$. Then $%
cl_{E}R$ has non-empty interior. Therefore $\varphi (cl_{E}R)$ also has
non-empty interior. That is there exists a non-empty open set $S$ in $\beta E
$ such that $S\cap E\subset int\varphi (cl_{E}R)$. But $cl_{\beta E}(S\cap
E)=cl_{\beta E}S$. Now notice that $cl_{E}(S\cap E)\subset \varphi
(cl_{E}R)\subset \tau (cl_{\beta E}U)$. Therefore $S\subset cl_{\beta
E}S=cl_{\beta E}(S\cap E)\subset \tau (cl_{\beta E}U)$ and $int\tau
(cl_{\beta E}U)\neq \emptyset $.
\end{proof}

\begin{lemma}
\label{l1} Let $K$ be a compact Hausdorff space, $\varphi : K \to K$ be a
continuous mapping, and $T_\varphi$ be the corresponding composition
operator on $C(K)$. The following conditions are equivalent.

\begin{enumerate}
\item The operator $T_\varphi$ is order continuous.

\item If $U$ is a closed subset of $K$ and $Int \,U \neq \emptyset$ then $%
Int \, \varphi(U) \neq \emptyset$.
\end{enumerate}
\end{lemma}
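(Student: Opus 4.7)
The plan is to first reformulate condition (2) in a form better suited to nets in $C(K)$: using that on a compact Hausdorff space $\varphi(\overline V)=\overline{\varphi(V)}$ for every open $V$ (continuous image of a compact set is compact, hence closed in the Hausdorff space $K$), condition (2) is equivalent to
\[
(2')\ \ V\subset K\ \text{open and nonempty}\ \Longrightarrow\ \overline{\varphi(V)}\ \text{has nonempty interior}.
\]
With (2') in hand I would prove each implication by a standard Urysohn argument.

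For (1)$\Rightarrow$(2) I argue by contrapositive. Assume a closed $U\subset K$ with $U^{\circ}\neq\emptyset$ but $\varphi(U)^{\circ}=\emptyset$, and let $\mathcal{F}$ be the family of all $f\in C(K)$ with $0\leq f\leq 1$ and $f\equiv 1$ on $\varphi(U)$, directed downwards by the pointwise order (closed under pointwise minima). Urysohn's lemma applied in the open neighbourhoods of $\varphi(U)$ shows that the pointwise infimum of $\mathcal{F}$ is the characteristic function $\chi_{\varphi(U)}$; since $\varphi(U)$ is closed with empty interior, any $g\in C(K)$ with $g\leq f$ for every $f\in\mathcal{F}$ must satisfy $g\leq 0$ on the dense set $K\setminus\varphi(U)$, hence $g\leq 0$ everywhere. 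So $\mathcal{F}\downarrow 0$ in $C(K)$. On the other hand, for $k\in U$ and $f\in\mathcal{F}$ one has $T_\varphi f(k)=f(\varphi(k))=1$, so $T_\varphi f\geq\chi_U\geq\chi_{U^{\circ}}$; a second Urysohn function produces a nonzero continuous minorant for $\{T_\varphi f\}_{f\in\mathcal{F}}$, so this net does not decrease to $0$ and $T_\varphi$ fails to be order continuous.

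For (2)$\Rightarrow$(1), take a net $f_\alpha\downarrow 0$ in $C(K)$ and suppose for contradiction that $T_\varphi f_\alpha\not\downarrow 0$, so some positive nonzero $g\in C(K)$ satisfies $g\leq T_\varphi f_\alpha$ for every $\alpha$. Choose $c>0$ with the open set $V_c:=\{g>c\}$ nonempty. Then $f_\alpha\circ\varphi>c$ on $V_c$, so $f_\alpha\geq c$ on $\varphi(V_c)$, and by continuity of $f_\alpha$ also on $\overline{\varphi(V_c)}=\varphi(\overline{V_c})$; by (2) applied to $\overline{V_c}$, this closure has a nonempty interior $W$, so $f_\alpha\geq c\chi_W$ for every $\alpha$. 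Urysohn again provides a nonzero continuous function dominated by $c\chi_W$ and hence by every $f_\alpha$, contradicting $f_\alpha\downarrow 0$. The only points requiring care are the identity $\varphi(\overline V)=\overline{\varphi(V)}$ and the translation of $f_\alpha\downarrow 0$ in $C(K)$ into \emph{no nonzero positive continuous minorant of the net exists}; beyond that, the proof reduces to two Urysohn applications, and I do not anticipate any serious obstacle.
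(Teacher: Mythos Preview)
Your proof is correct and follows essentially the same approach as the paper's: both directions are handled by contrapositive/contradiction via Urysohn functions, using that a closed set with empty interior admits a net in $C(K)$ decreasing to $0$ that is identically $1$ on it, and conversely that a nonzero continuous lower bound for $\{T_\varphi f_\alpha\}$ forces $f_\alpha$ to be bounded below on a set whose image under $\varphi$ has nonempty interior. The only cosmetic differences are that the paper works directly with the closed level set $\{g\geq\varepsilon\}$ rather than closing up $\{g>c\}$, and does not bother with your preliminary reformulation $(2')$.
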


\begin{proof}
$(1) \Rightarrow (2)$ Assume to the contrary that there is a closed subset $%
U $ of $K$ such that $Int \, U \neq \emptyset$ but $Int \, \varphi(U) =
\emptyset$. Then we can find a net $\{f_\alpha\}$ in $C(K)$ such that $%
f_\alpha \downarrow 0$ and $f_\alpha = 1$ on $\varphi(U)$. Let $g \in C(K)$
be such that $g \neq 0$, $supp \; g \subset Int \, U$, and $0 \leq g \leq 1$%
. But then $T_\varphi f_\alpha \geq g$ in contradiction to $T_\varphi$ being
order continuous.

$(2) \Rightarrow (1)$ Assume to the contrary that the condition $(2)$ is
satisfied but $T_\varphi$ is not order continuous. Let $\{f_\alpha\}$ be a
net in $C(K)$ such that $f_\alpha \downarrow 0$ and $T_\varphi f_\alpha \geq
g \gvertneqq 0$ where $g \in C(K)$. If we fix a small enough positive $%
\varepsilon$ then the set $U = \{k \in K : g(k) \geq \varepsilon \}$ is a
closed subset of $K$ and $Int \, U \neq \emptyset$. But then $f_\alpha \geq
\varepsilon$ on $\varphi(U)$ and because $Int \, \varphi(U) \neq \emptyset$
we have a contradiction to $f_\alpha \downarrow 0$.
\end{proof}

\bigskip

\begin{remark}
\label{r3} Consider the operator $A$ constructed in Example~\ref{e2}. The
powers $A^n, n \in \mathds{N}$, are order continuous positive operators and
it can be easily seen that they converge in the operator norm to the
spectral projection on the one-dimensional subspace generated by the
eigenvector $\mathds{1}$. This projection is not order continuous because
its kernel is an ideal but not a band in $X$ (it is not even $\sigma$-order
continuous because the Banach lattice $X$ is separable and every band in it
is a principal band). It was proved in~\cite[Theorem 2.16]{KW} that if we
have a Banach lattice $Z$ and the Lorentz seminorm is a norm on $Z$ then the
operator norm limit of a sequence of order continuous positive operators is
also order continuous. Therefore we see that the condition that the Lorentz
seminorm is a norm cannot be omitted from the cited above theorem. The
corresponding example in~\cite{KW} (see~\cite[Section 3]{KW}) contains
typing errors in its definition. It can be corrected to yield the desired
result. However the proof given there needs to be corrected as well.
\end{remark}

Now we will continue to discuss the conditions in the statement of Theorem~%
\ref{t1} part (c). As the following example shows it is not possible to omit
the condition that $B$ is order continuous.

\begin{example}
\label{e4} We will first reproduce Example 5.1 from~\cite{AAB}. Let $X =
M_\psi$ be a Marcinkiewicz space on $(0,1)$ generated by the function $\psi$
such that $\lim \limits_{t \to 0} \frac{\psi(2t)}{\psi(t)} = 1$. Then (see~%
\cite{Lo}) there is a singular positive functional $f$ on $X$ such that $f$
does not annihilate any nonzero band in $X$. Let $e$ be a positive weak unit
in $X$ such that $f(e) = 0$. Then it is easy to see that the one dimensional
operator $T$ on $X$ defined as $Tx = f(x)e, x \in X$ is band irreducible and
$T^2 = 0$.

Now we can use the example above to construct the desired counterexample.
Let $g$ be a positive functional on $X$ such that $g(e) = 1$, let $Ax
=g(x)e, x \in X$ and let, $B = A + T$. Then clearly $\sigma(A) = \{0,1\}$
whence $\rho(B) \geq 1$. But $\rho(B)$ is an eigenvalue of $B$ and it is
immediate to see that $e$ is the only eigenvector of $B$ corresponding to a
non-zero eigenvalue, and that the corresponding eigenvalue is $1$.
\end{example}

\begin{remark}
\label{r2} The famous theorem of de Pagter ~\cite{Pa} states that any ideal
irreducible positive compact operator has nonzero spectral radius. Thus
Example~\ref{e4} illustrates the difference between compact band irreducible
and compact ideal irreducible operators. In connection with this example the
following problem might be of independent interest.
\end{remark}

\begin{problem}
\label{pr1} On which Banach lattices do there exist positive compact band
irreducible quasinilpotent operators?
\end{problem}

\centerline{ \textbf{Appendix}}

\begin{proposition}
\label{prop1} The condition in part (b) of Lemma~\ref{l3} that the operator $%
T$ is weakly compact can be substituted by the condition that $T$ commutes
with a positive weakly compact nonzero operator $S$ and either $S$ is order
continuous or $S$ is $\sigma $-order continuous and band irreducible.
Respectively we can change the conditions in parts $(c)$ and $(d)$ of
Theorem~\ref{t1}.
\end{proposition}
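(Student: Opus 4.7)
The plan is to run the proof of Lemma~\ref{l3}(b) almost verbatim, with $S$ taking over the role previously played by the weak compactness of $T$. As in that proof, normalize $\rho(T)=1$, let $p$ be the order of the pole at $1$, and set $Q_{-p} := \lim_{\lambda \downarrow 1}(\lambda-1)^p R(\lambda,T)$; the remainder of Lemma~\ref{l3}(b) uses only that $Q_{-p}$ is $\sigma$-order continuous, so this is what I aim to show. Since $S$ commutes with $T$, it commutes with every resolvent $R(\lambda,T)$ and hence with $Q_{-p}$.

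Suppose, for contradiction, that $Q_{-p}$ is not $\sigma$-order continuous, so there exist $x_n \downarrow 0$ and $y \gvertneqq 0$ with $Q_{-p}x_n \geq y$. In either sub-case $S$ is $\sigma$-order continuous, so $Sx_n \downarrow 0$ in order. Because $S$ is weakly compact the bounded sequence $\{Sx_n\}$ is relatively weakly compact, and any weak cluster point $z$ satisfies $0 \leq z \leq Sx_m$ for every $m$ (by testing against positive functionals and using monotonicity of the sequence), whence $z = \inf_m Sx_m = 0$; therefore $Sx_n \to 0$ weakly. Commutation gives $Q_{-p}Sx_n = SQ_{-p}x_n \geq Sy \geq 0$, while boundedness of $Q_{-p}$ gives $Q_{-p}Sx_n \to 0$ weakly; pairing against arbitrary positive functionals forces $Sy = 0$.

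It remains to extract a contradictory invariant band. In the first sub-case, where $S$ is order continuous, $\ker S$ is itself a band, nontrivial (it contains $y$), proper (since $S \neq 0$), and $T$-invariant via $ST=TS$, contradicting band irreducibility of $T$. In the second sub-case, where $S$ is $\sigma$-order continuous and band irreducible, the same ideal-to-band upgrade used implicitly at the end of Lemma~\ref{l3}(b) promotes the relation $Sy = 0$ to $S\{y\}^{dd} = \{0\}$; then $\{y\}^{dd}$ is a nontrivial proper $S$-invariant band (proper because $S \neq 0$), contradicting band irreducibility of $S$. The main obstacle is precisely this last ideal-to-band upgrade in the $\sigma$-order continuous setting, which one carries out along the lines of \cite{Ki,Ki1}; the rest of the argument is a direct transcription of Lemma~\ref{l3}(b) with $S$ in place of $T$ wherever weak compactness was invoked.
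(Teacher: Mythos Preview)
Your argument is essentially the paper's own: reduce to showing $Q_{-p}$ is $\sigma$-order continuous, use $\sigma$-order continuity and weak compactness of $S$ together with $SQ_{-p}=Q_{-p}S$ to force $Sy=0$, and then extract a forbidden invariant band in each sub-case. One small slip to fix in the first sub-case: the linear kernel $\ker S$ of a positive order continuous operator need not be an ideal, let alone a band; what you want (and what the paper uses, phrased as ``the maximal ideal $Z$ with $SZ=0$'') is the null ideal $N(S)=\{x: S|x|=0\}$, which is a band when $S$ is order continuous, contains $y$, and is $T$-invariant via $S|Tx|\le ST|x|=TS|x|=0$.
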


\begin{proof}
As it was noticed in the proof of Lemma~\ref{l3} it is enough to prove that
the operator $Q_{-p} = \lim \limits_{\lambda \to 1} (\lambda
-1)^pR(\lambda,T)$ is $\sigma$-order continuous. Assume to the contrary that
$x_n \downarrow 0$ but $Q_{-p}x_n \geq y \gvertneqq 0$. We have $Sx_n %
\mathop \rightarrow \limits^w 0$ whence $Q_{-p}Sx_n \mathop \rightarrow
\limits^w 0$ and because $SQ_{-p} = Q_{-p}S$ $SQ_{-p}x_n \mathop \rightarrow
\limits^w 0$. But $SQ_{-p}x_n \geq Sy$ whence $Sy =0$.

Assume first that $S$ is order continuous. Let $Z$ be the maximal by
inclusion ideal in $X$ such that $SZ = 0$. Then $Z$ is a nonzero band in $X$%
. We claim that $TZ \subseteq Z$. Indeed, otherwise there are positive
nonzero $z \in Z$ and $u \perp Z$ such that $Tz \geq u$. But $STz = TSz = 0$
whence $Su = 0$ in contradiction with maximality of $Z$. But $T$ is band
irreducible whence $Z = X$ and $S = 0$, a contradiction.

Next assume that $S$ is $\sigma $-order continuous and band irreducible.
Then $S\{y\}^{dd}=0$ whence $\{y\}^{dd}=X$ and $S=0$, a contradiction again.
\end{proof}

\bigskip

\textbf{Acknowledgement.} We thank E. Alekhno who read the first version of
the paper (arXiv: 1205.5583) and made a number of valuable comments. Some of
these have been incorporated into the present paper. In addition, three
papers have recently appeared (see~\cite{Al1}, ~\cite{Ga}, ~\cite{BMR})
containing independently obtained results partially intersecting with our
Theorem~\ref{t1}. We thank, respectively, E. Alekhno, V. Troitsky and N.
Gao, and J. Bernik for alerting us to the respective papers.

\end{document}